\documentclass[titlepage,draft,11pt]{article} 
\usepackage[title]{appendix}
\usepackage{amssymb,amsthm} 
\usepackage[a4paper]{geometry}
\usepackage[usenames,dvipsnames]{xcolor}


\geometry{text={15.75 cm, 22 cm},centering,includefoot}



\date{}


%
%
%
%
%


%



\newtheorem{thm}{Theorem}[section]

\newtheorem{rmk}[thm]{Remark}
\newtheorem{prop}[thm]{Proposition}

\newtheorem{hyp}[thm]{Hypothesis}

 
\title{{A simple characterization of positivity preserving semi-linear parabolic systems }}

\author{Alain Haraux\vspace{1ex}\\ 
{\normalsize Sorbonne Universit\'es, UPMC Univ Paris 06}\\
{\normalsize CNRS, UMR 7598}\\
{\normalsize Laboratoire Jacques-Louis Lions}\\ 
{\normalsize 4, place Jussieu 75005}\\ 
{\normalsize PARIS (France)}\\  
{\normalsize e-mail: \texttt{haraux@ann.jussieu.fr}}}


\begin{document}
\maketitle
\begin{abstract} We give a simple and direct proof of the characterization of positivity preserving semi-flows for ordinary differential systems. The same method provides an abstract result on a class of evolution systems  containing reaction-diffusion systems in a bounded domain  of $ \mathbb{R}^n$ with either Neumann or Dirichlet homogeneous boundary conditions. The conditions are exactly the same with or without diffusion. A similar approach gives the optimal result for invariant rectangles in the case of Neumann conditions.

\vspace{6ex}

\noindent{\bf Mathematics Subject Classification 2010 (MSC2010):}
34C10, 34C14,  35B51, 35K57, 35K58, 35K90, 35K91, 35Q92.

\vspace{6ex}

\noindent{\bf Key words:} Systems of ODE, Semilinear parabolic systems, Neumann boundary conditions, Dirichlet boundary conditions
Positivity preserving flow, invariant regions.

\end{abstract}

 
\section{Introduction}In chemistry and biology, we often have to deal with evolution systems whose solutions are essentially positive (component concentration rates, population densities, ...). In such cases a model will be disqualified if it predicts negative values, and it is therefore very important to have a criterion for positivity preservation in the forward direction.  The case of a single equation is simple because semi linear parabolic equations are well known to be order preserving, just like first order scalar equations. Therefore the semi-flow of a parabolic equation in a bounded domain $\Omega$ of the form $$\frac{\partial u}{\partial t} - d\Delta u = f(u) \quad \hbox{in}\quad  \mathbb{R}^+\times \Omega $$ where $d>0$ with either Neumann or Dirichlet boundary conditions will be positively preserving if and only if $f(0)\ge 0$. In contrast, even the simple ODE system $ (u', v') = (v, -u) = F(u, v) $ is not positivity preserving although $F(0, 0) = (0, 0).$ \\

In 1970 a simple characterization of closed invariant sets under the semi-flow generated by any locally Lipschitz vector field in $ \mathbb{R}^n$ was given by H.Brezis \cite{Brezis}. This characterization, applicable without any restriction on the regularity of the invariant set $S$, says essentially that at any point of $\partial S$, the vector field is not strictly outgoing. For instance if $S$ is convex, the condition is equivalent to $$\forall u \in \partial S,  F(u) \in \overline { \bigcup _{\lambda>0} \lambda (S-u)}$$ Moreover, if $S$ is a convex polyhedron, it is in fact sufficient to fulfill the condition at those points $u$ for which the tangent cone $\overline { \bigcup _{\lambda>0} \lambda (S-u)}$ is a half-space, since these points are dense in the boundary and the tangent cone of an extremal point is the intersection of the tangent half-spaces of neighboring non-extremal points. These considerations are applicable to the positive cone of $ \mathbb{R}^n$  and would give the correct necessary and sufficient conditions. \\

In this paper, we give a simple and direct proof of the characterization of positivity preserving semi-flows for ordinary differential systems. Then we extend our method to an abstract result on a class of semi linear evolution problems containing the case of reaction-diffusion systems in a bounded domain  of $ \mathbb{R}^n$ with either Neumann or Dirichlet boundary conditions. The conditions will turn out in both cases to be the same with or without diffusion.\\

Apart from the relevance of the models in chemistry and biology, the positivity preserving property is also of major importance in the mathematical investigation of global existence of solutions, since the systems met in biology or chemistry have often the property of conservation of total mass. This led to many research papers, such as \cite{Al, Masuda, Bara, HK, HMP, HY}, and the subject is still active today.\\

The plan of the paper is as follows: Section 2 is devoted to a simple direct proof in the case of ODE systems, in Section 3 a general class of semi linear evolution systems with diagonal linear part is considered, with application to parabolic equations with Neumann or Dirichlet homogeneous boundary conditions, respectively in Sections 4 and 5. The final section 6 is devoted to a few remarks, including a characterization of (positively) invariant rectangles for semi linear parabolic systems with Neumann   homogeneous boundary conditions.

\section{A simple characterization of positivity preserving semi-flows for ordinary differential systems}
\label{Pos.pres.}

 Let $F = (f_1...f_n)$ be a locally Lipschitz continuous vector field on $ \mathbb{R}^n$. We look for a condition to insure positivity preserving of the (local) positive semi-flow generated by the system 
 
 $$ \dot{U} = F(U) $$
 
 \noindent  A first remark is that if  $ f_i (u_1,...,u_{i-1}, 0,u_{i+1},...,u_n) < 0 $ for a certain index  $ i $ and some $(n-1)$-vector $(u_1,... ,u_{i-1},u_{i+1},... ,u_n)$  with all its components  $\ge 0$, the local solution
with initial data  $ (u_1,... ,u_{i-1}, 0, u_{i+1},... ,u_n) $  will have a negative $ i-th$ component for t small. By continuity, the same property will be true for initial data of the form $ (a, a...a)+ (u_1,... ,u_{i-1}, 0, u_{i+1},... ,u_n)$ with  $a>0$ small, so that the flow will not preserve positivity. A necessary condition for positivity preserving is therefore that for all indices $i\in \{1,2...n\}$ we have 
\begin{equation}  \label{pos-cond} \forall (u_j)_{j\not=i} \in( \mathbb{R^+})^{n-1}, \quad f_i (u_1,...,u_{i-1}, 0,u_{i+1},...,u_n) \ge 0  \end{equation}
Surprisingly enough it turns out that this condition is in fact sufficient. We have

\begin{prop}\label{pos.pre}  Assuming condition (\ref{pos-cond}), let $U = (u_1,...u_n)\in C^1([0, Tmax),\mathbb{R}^n)$ be any maximal positive trajectory of the system $\dot{U} = F(U)$. Then if  the initial vector $U(0)$ has all its components positive, the same property is true for the solution vector $U(t)$ for all $ t\in [0, T_{\max}).$ \end{prop}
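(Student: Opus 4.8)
The plan is to argue by contradiction, ruling out the first instant at which the trajectory could touch the boundary of the positive orthant, and to extract from the hypothesis (\ref{pos-cond}) a one-sided differential inequality of Gronwall type for each component. The point is that (\ref{pos-cond}) only controls $f_i$ on the face $\{u_i=0\}$, so the local Lipschitz character of $F$ must be used to propagate this boundary information slightly into the interior.

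Concretely, suppose the conclusion fails and let $t_0$ be the first time at which $\min_j u_j(t)$ reaches $0$; say $u_i(t_0)=0$. Since $U(0)$ lies in the open orthant $(0,\infty)^n$ and $U$ is continuous, we have $t_0>0$, all components stay strictly positive on $[0,t_0)$, and $U(t_0)\in[0,\infty)^n$. First I would observe that the compact arc $K=U([0,t_0])$ sits entirely in the closed nonnegative orthant, so that on some compact convex neighborhood of $K$ the field $F$ admits a single Lipschitz constant $L$ valid for every component.

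The core of the argument is then a pointwise lower bound, for each fixed index $i$ and each $t\in[0,t_0]$. Let $V(t)$ be the point obtained from $U(t)$ by replacing its $i$-th coordinate with $0$. All remaining coordinates of $V(t)$ are $\ge 0$, so (\ref{pos-cond}) gives $f_i(V(t))\ge 0$; since $U(t)$ and $V(t)$ differ only in the $i$-th slot, by the amount $u_i(t)\ge 0$, the Lipschitz bound yields
$$\dot u_i(t)=f_i(U(t))\ge f_i(V(t))-L\,|u_i(t)|\ge -L\,u_i(t).$$
Multiplying by $e^{Lt}$ shows that $t\mapsto e^{Lt}u_i(t)$ is nondecreasing on $[0,t_0]$, whence $u_i(t_0)\ge e^{-Lt_0}u_i(0)>0$. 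This contradicts $u_i(t_0)=0$, so no such $t_0$ exists and every component remains strictly positive throughout $[0,T_{\max})$.

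The only genuine obstacle I anticipate is exactly the passage just used: converting the boundary hypothesis into a usable interior estimate for $\dot u_i$, which is what the Lipschitz comparison with the projected point $V(t)$ achieves, and which is also why the argument must be run only up to the first exit time, where the whole arc is guaranteed to lie in the nonnegative orthant so that (\ref{pos-cond}) applies legitimately to every coordinate at once. A softer alternative would be to perturb the field to $F+\varepsilon(1,\dots,1)$, prove strict positivity of the perturbed flow by a first-contact argument at the faces, and let $\varepsilon\to 0$; but this only returns $u_i(t)\ge 0$ in the limit, so the direct Gronwall estimate above is preferable for securing the strict positivity asserted in the statement.
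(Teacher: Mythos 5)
Your proof is correct and follows essentially the same route as the paper's: a first-contact argument combined with a Lipschitz comparison of $f_i(U(t))$ with $f_i$ evaluated at the projection of $U(t)$ onto the face $\{u_i=0\}$, followed by the Gronwall conclusion $u_i(t_0)\ge e^{-Lt_0}u_i(0)>0$. The only point to tidy is that the paper takes its Lipschitz constant on a product of bounded intervals containing both the arc $U([0,t_0])$ \emph{and} its projections onto $\{x_i=0\}$; your ``compact convex neighborhood of $K$'' should likewise be enlarged to contain the projected points $V(t)$, which need not lie close to $K$ when $u_i(t)$ is large.
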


\begin{proof} Assuming the contrary, let $i$ be one of the indices (there may be several) for which  $u_i(t)$ vanishes for the first time at $ T<T_{\max}$ while the other components have remained $ \ge 0$ until time $T$ . For all  $t \in  (0, T)$:

$$ \dot{u_i}(t) = f_i (u_1,..., u_i,...u_n)=  $$  
$$f_i (u_1,..., u_i,...u_n)- f_i (u_1,...,0,...u_n)+ f_i (u_1,..., 0,...,u_n) \ge -Mu_i ,$$
where  M is a  Lipschitz constant of $ F $ on a product of bounded intervals containing  $U([0, T])$ as well as its projections on the hyperplane  ${ x_i = 0}$.
Then  $ u_i(t) \exp(Mt) $ is nondecreasing on $(0, T)$, in particular by  continuity at $T$ we obtain 
$$u_i(T)\ge \exp(-MT)u_i(0)>0, $$ a contradiction.\end{proof}

\begin{rmk} \begin{em} If $n = 1$ ,  condition (\ref{pos-cond}) reduces to the obvious inequality  $f(0)\ge 0$. \end{em}\end{rmk}
\begin{rmk} \begin{em} Proposition \ref{pos.pre} implies the following more general statement: \end{em}
\begin{thm}\label{pos.pre-we}  Assuming condition (\ref{pos-cond}), let $U = (u_1,...u_n)\in C^1([0, Tmax),\mathbb{R}^n )$ be any maximal positive trajectory of the system $\dot{U} = F(U)$. Then if  the initial vector $U(0)$ has all its components non-negative, the same property is true for the solution vector $U(t)$ for all $ t\in [0, Tmax).$ Conversely if this property holds true for all solutions, then condition (\ref{pos-cond}) is satisfied. Finally, for any $j\in \{1,...n\}$ such that $u_j(0)>0$, we have $$ \forall t\in[0, T_{\max}),\quad u_j(t)>0. $$\end{thm}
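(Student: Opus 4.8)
The plan is to deduce all three assertions from Proposition~\ref{pos.pre} together with the Gronwall-type differential inequality already exploited in its proof, handling the three parts in the order: preservation of nonnegativity, necessity of the condition, and finally the strict positivity of the initially positive components. The guiding idea is that nonnegative data are limits of strictly positive data, and that once global nonnegativity is known, the estimate of Proposition~\ref{pos.pre} can be rerun componentwise.

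First, for the nonnegativity statement I would argue by approximation with strictly positive data. Given $U(0)$ with all components $\ge 0$, set $U^{\delta}(0) = U(0) + \delta(1,\dots,1)$ for $\delta>0$; this vector has all components $>0$, so by Proposition~\ref{pos.pre} the corresponding maximal trajectory $U^{\delta}$ stays strictly positive on its whole interval of existence. By continuous dependence of solutions of a locally Lipschitz system on their initial data, for any compact $[0,T]\subset[0,T_{\max})$ the perturbed solutions $U^{\delta}$ are defined on $[0,T]$ once $\delta$ is small enough, and $U^{\delta}\to U$ uniformly on $[0,T]$ as $\delta\to 0^{+}$. Passing to the limit in $U^{\delta}(t)>0$ then yields $U(t)\ge 0$ on $[0,T]$, hence on all of $[0,T_{\max})$.

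Second, the converse is exactly the necessity argument sketched just before Proposition~\ref{pos.pre}: if (\ref{pos-cond}) fails, there exist an index $i$ and a nonnegative $(n-1)$-vector with $f_i(u_1,\dots,0,\dots,u_n)<0$. The solution issued from the nonnegative datum $(u_1,\dots,0,\dots,u_n)$ then has $\dot u_i(0)<0$, so its $i$-th component becomes negative for small $t>0$, contradicting preservation of nonnegativity. Third, for a component with $u_j(0)>0$ I would use the nonnegativity just established to run the estimate of Proposition~\ref{pos.pre} globally on $u_j$: since every component of $U(t)$ is $\ge 0$ for all $t$, condition (\ref{pos-cond}) gives $f_j(u_1,\dots,0,\dots,u_n)\ge 0$ at every time, while the Lipschitz bound on $F$ gives $f_j(U)-f_j(u_1,\dots,0,\dots,u_n)\ge -Mu_j$ (using $u_j\ge 0$). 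Hence $\dot u_j\ge -Mu_j$ on each $[0,T]$, so $u_j(t)\ge e^{-Mt}u_j(0)>0$, and letting $T\uparrow T_{\max}$ finishes the proof.

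The only point requiring genuine care is the interval-of-existence issue in the approximation step: one must invoke that the maximal existence time is lower semicontinuous in the initial data, so that the perturbed trajectories $U^{\delta}$ survive on a fixed compact $[0,T]$ and converge there uniformly, rather than merely pointwise. The remaining two parts are essentially transcriptions of arguments already present in the excerpt, and no new difficulty is expected there.
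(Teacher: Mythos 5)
Your proposal is correct and follows essentially the same route as the paper: perturbation of the data by $\delta(1,\dots,1)$ combined with Proposition~\ref{pos.pre} and continuous dependence for the nonnegativity, the preliminary remark for the converse, and the Gronwall inequality $\dot u_j\ge -Mu_j$ for the strict positivity of initially positive components. You merely spell out in more detail the steps the paper compresses into a few lines.
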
\begin{proof} Given any $T<T_{\max}$, it is classical that the solution $U_\varepsilon$  with initial condition $U_\varepsilon(0): = U(0)+ \varepsilon(1,...1) $ exists on $[0, T]$ for  $\varepsilon>0$ small enough and $U_\varepsilon$ converges to $U$ in $C^1([0, T])$ as $\varepsilon$ tends to $0$ . The non-negative character of all components follows immediately from \ref{pos-cond}. The converse follows from the preliminary remark. The last property follows from the inequality $$u_j(t)\ge \exp(-Mt)u_j(0), $$ where  M is a  Lipschitz constant of $ F $ on a product of bounded intervals containing  $U([0, T])$ as well as its projections on the hyperplane  $\{ x_j = 0\}$.\end{proof}\end{rmk}

\begin{rmk}\begin{em}\label{nonaut}  Let $F(t, U) = (f_1(t, u_1)...f_n(t,.u_n ))$ be a time dependent vector field on $ \mathbb{R}^n$, Lipschitz continuous on bounded sets in $U$ uniformly for $t\ge0$. Assuming the condition \begin{equation}  \label{nonautpos-cond}\forall t\ge0, \quad   \forall (u_j)_{j\not=i} \in( \mathbb{R^+})^{n-1}, \quad f_i (t, u_1,...,u_{i-1}, 0,u_{i+1},...,u_n) \ge 0  \end{equation}
 let $U = (u_1,...u_n)$ be any maximal positive trajectory of the equation  $\dot{U} = F(t, U(t))$. Then if  the initial vector $U(0)$ has all its components positive, the same property is true for the solution vector $U(t)$ for all $ t\in [0, Tmax).$ However condition  (\ref {nonautpos-cond} ) is not quite necessary in general for positivity preserving of the local semi-flow starting at $0$ . For instance let $g(t)\in C^1(\mathbb{R}^+)$ be any function such that $g (t)>0$ for all $t> 0$  and $g'(t_0)<0$ for some $t_0>0$. Then the scalar equation 
 $$ u' = g'(t) $$gives an example of a system for which $f(t, u) = g'(t)$ does not fulfill condition (\ref{nonautpos-cond}) and, however, the condition $u(0)\ge 0$ implies $u(t) = u(0) + g(t)>0$ for all $t> 0$. Here all solutions are global and preservation of positivity is no longer  true if we start from $t_0$ instead of $0$. \end{em}\end{rmk}
 
 \section{Positivity preserving semi-flows for some abstract semilinear systems}
\label{Parab.}

 Let $d$ any positive integer, let $\Omega$ be a bounded domain of  $ \mathbb{R}^d$ with $C^1$ boundary and let $X$ be a closed subspace of $ C(\overline{\Omega})$. Let $F = (f_1...f_n)$ be a locally Lipschitz continuous vector field on $ \mathbb{R}^n$ and let us consider the diagonal operator $L U= (L_1u_1,...L_n u_n) $ associated to a n-vector  $(L_1, ...L_n)$ of (possibly unbounded) linear operators on $X$.  We assume that each operator $L_i$ is m-dissipative on $X$. We look for a condition to insure positivity preserving of the (local) positive semi-flow generated by the system 
 
 $$ \dot{U} = LU+ F(U) $$ 
 The positive trajectories of the system are by definition the maximal mild solutions of the integral equation 
 
  $$ U(t) = S(t) U(0)+ \int_0^t S(t-s) F(U(s)) ds $$ or, in a more analytic component-wise form 
  $$ u_i(t, x) = [S_i(t) u_i(0)](x) + \int_0^t S_i(t-s) f_i(u_1(s, x),...u_n(s, x)) ds, $$
  where for each $i\in {1,...n}$, we denoted by $S_i(t) = \exp(tL_i)$ the semi-group generated on $X$ by $L_i$
  and $S(t) V: = (S_1(t)v_1,...S_n(t)v_n)$ for each $V= (v_1,...v_n)\in X^n$. \\

 In order to guarantee that the property holds true when $F = 0$, we assume that for each $i\in {1,...n}$, the semi-group $S_i(t) = \exp(tL_i)$ is strongly positive in the following sense: 
 \begin{hyp} \label{strongpos} Whenever $\phi \in X$ satisfies both properties 
$\phi\ge 0$ and $\phi\not\equiv 0$ , we have $$ \forall t>0, \quad \forall x \in \Omega, \quad [S_i(t)\phi ](x)>0 $$  \end{hyp}

We now state our main result

 \begin{thm}\label{strongpos.pre}  Assuming condition (\ref{pos-cond}), let $U= (u_1(t, x),...u_n(t, x) )\in C([0, T_{max}, X^n) $ be any (maximal) positive trajectory of the system  $$ \dot{U} = LU+ F(U) $$ Then if  the initial vector $U(0)= U_0= (u_{0,1}( x),...u_{0,n}( x) )$ is such that for each $i\in {1,...n}$, we have $\forall x \in \Omega, u_{0,i}(x)\ge 0 $ and $u_{0,i}\not\equiv 0 $, then all components of  the solution vector $U(t, x)$ are strictly positive everywhere in $\Omega$ for all $ t\in [0, T_{max})$, assuming that this property is true for $t>0$ small enough.\end{thm}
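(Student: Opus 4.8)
The plan is to transpose to the mild formulation the Lipschitz trick of Proposition \ref{pos.pre}. There the decisive step was to write $f_i(U) = [f_i(U) - f_i(\dots,0,\dots)] + f_i(\dots,0,\dots)$ and deduce from the Lipschitz constant and (\ref{pos-cond}) the differential inequality $\dot u_i \ge -M u_i$, whose integration yielded $u_i(t) \ge e^{-Mt}u_i(0)$. In the present abstract setting the scalar inequality $y' \ge -My$ must be replaced by a comparison with the semigroup generated by $L_i - M$; I therefore introduce $\tilde S_i(t) := e^{-Mt}S_i(t)$, which is still a contraction, is positivity preserving, and — crucially — remains strongly positive in the sense of Hypothesis \ref{strongpos}, since $e^{-Mt}>0$.

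The core computation is as follows. Fix $t_1 \in (0,T_{\max})$ and let $M$ be a Lipschitz constant of $F$ on a bounded set containing $U([0,t_1])$ together with its projections onto the hyperplanes $\{x_i=0\}$. Assuming for the moment that every component satisfies $u_j(s,\cdot)\ge 0$ on $[0,t_1]$, condition (\ref{pos-cond}) gives $g_i(s):=f_i(u_1(s),\dots,0,\dots,u_n(s))\ge 0$, and the Lipschitz bound gives the pointwise estimate $f_i(U(s)) \ge g_i(s) - M u_i(s) \ge -M u_i(s)$. Since removing the bounded operator $M I$ from $L_i$ merely transfers the term $M u_i$ to the inhomogeneity, the same trajectory satisfies $u_i(t) = \tilde S_i(t)u_{0,i} + \int_0^t \tilde S_i(t-s)\big(M u_i(s)+f_i(U(s))\big)\,ds$. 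Here the integrand is $\ge g_i(s)\ge 0$, so by positivity of $\tilde S_i$ the integral is $\ge 0$; hence $u_i(t_1) \ge \tilde S_i(t_1)u_{0,i} = e^{-Mt_1}S_i(t_1)u_{0,i}$. Because $u_{0,i}\ge 0$ and $u_{0,i}\not\equiv 0$, Hypothesis \ref{strongpos} forces $[S_i(t_1)u_{0,i}](x)>0$ for every $x\in\Omega$, and strict positivity of $u_i(t_1,\cdot)$ on $\Omega$ follows.

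The one thing this argument takes for granted, and which I expect to be the real obstacle, is the nonnegativity of all components on the whole interval $[0,t_1]$. This cannot simply be read off by continuity from positivity for small $t$: in the Dirichlet case $u_i(t,\cdot)$ vanishes on $\partial\Omega$, so strict interior positivity is not an open condition in $t$ and a naive first-failure-time argument meets a genuine difficulty at the boundary. I would therefore establish nonnegativity separately and unconditionally, by a Gronwall estimate on the negative parts: comparing $f_i(U)$ with $f_i$ evaluated at the truncated vector and using (\ref{pos-cond}) gives the global bound $M u_i + f_i(U) \ge -2M\sum_j u_j^-$, so that applying the positive contraction $\tilde S_i$ in the representation above and taking sup-norms yields $\sum_i\|u_i^-(t)\|_{C(\overline\Omega)} \le 2Mn\int_0^t\sum_j\|u_j^-(s)\|_{C(\overline\Omega)}\,ds$, whence $u_i^-\equiv 0$ on $[0,t_1]$. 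With nonnegativity in hand the computation of the previous paragraph applies at every $t_1\in(0,T_{\max})$, giving strict positivity throughout; in particular the hypothesis that positivity already holds for small $t>0$ turns out not to be needed, being used in the statement only to frame the result as a propagation property.
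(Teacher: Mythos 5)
Your proof is correct, and its core estimate coincides with the paper's: once all components are known to be nonnegative on $[0,t_1]$, condition (\ref{pos-cond}) and the Lipschitz bound give $u_i(t_1)\ge e^{-Mt_1}S_i(t_1)u_{0,i}=\exp\bigl(t_1(L_i-M)\bigr)u_{0,i}$, and Hypothesis \ref{strongpos} then yields strict positivity in $\Omega$; this is exactly the displayed inequality in the paper's proof. Where you genuinely diverge is in how that nonnegativity is secured. The paper argues by contradiction via a first time $T$ at which some component vanishes at some point of $\Omega$, relying on the hypothesis of strict positivity for small $t$ to make $T>0$; you instead prove nonnegativity unconditionally by a Gronwall estimate on $\sum_j\|u_j^-(t)\|_\infty$, derived from the pointwise bound $Mu_i+f_i(U)\ge -2M\sum_j u_j^-$ (truncation plus (\ref{pos-cond})) and the fact that $e^{-Mt}S_i(t)$ is a positive contraction. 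Your route buys two things. First, it sidesteps a point the paper leaves implicit, namely whether a first vanishing time at an \emph{interior} point is attained at all: the infimum of bad times can correspond to a zero that has migrated to $\partial\Omega$, where vanishing is automatic under Dirichlet conditions, which is precisely the setting of Section 5. Second, it shows that the theorem's final hypothesis --- strict positivity for $t>0$ small --- is superfluous, since nonnegativity on all of $[0,t_1]$ follows from $U_0\ge 0$ alone. The only caveats are at the level of rigor the paper itself adopts: the Lipschitz constant $M$ must be taken on a set large enough to contain the truncated vectors as well as the trajectory, and one must assume that $u_j^-$ and $f_i(U)$ lie where the semigroups act, which holds in both concrete cases $X=C(\overline\Omega)$ and $X=C_0(\Omega)$.
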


\begin{proof} Assuming the contrary, let $i$ be one of the indices (there may be several) for which  $u_i(t)$ vanishes for the first time at $ T<T_{max} $ and some point $x\in \Omega$ while the other components have remained positive (at least nonnegative) until time $T$. Then by the same argument as in the ODE case we obtain, by boundedness of the solution on $[0,T]\times \overline{\Omega}$, the existence of a finite positive constant $M$ such that for all  $t \in  (0, T)$:
$$u_i(T, x)\ge[ \exp(T(L_i -M))u_{0, i}](x)>0, $$ a contradiction.\end{proof}
\begin{rmk} \begin{em}Theorem \ref{strongpos.pre} is very general, in particular the operators $L_i$ are not supposed to be symmetric and can be quite independent from each other. In the two next sections we apply this result to parabolic systems of much more restricted type.\end{em}\end{rmk}

 \section{The Neumann case}
\label{Neu.} In this section we investigate the positivity preserving property for a reaction-diffusion system with diagonal diffusion part of the form 
\begin{equation}\label {parab.}  \frac{\partial u_i}{\partial t} - d_i \Delta u_i = f_i(u_1,...u_n) \quad \hbox{in}\quad  \mathbb{R}^+\times \Omega \end{equation}
with all coefficients $d_i>0$ under the homogeneous Neumann boundary conditions \begin{equation} \label {Neu} \frac{\partial u_i}{\partial \nu} = 0\quad \hbox {on}\quad  \mathbb{R}^+\times \partial \Omega.  \end{equation} As a first observation, we remark that for each $i$, the operator $d_i\Delta $ with  homogeneous Neumann boundary conditions and the standard associated domain generates on $ C(\overline{\Omega})$ a contraction semi-group  $ S_i(t) $  which satisfies a strengthened variant of condition \ref{strongpos}: Whenever $\phi \in X$ satisfies both properties 
$\phi\ge 0$ and $\phi\not\equiv 0$ , we have $$ \forall t>0, \quad \inf_{x\in\overline{\Omega} } [S_i(t)\phi ](x)>0 $$  The second observation is that the spatially homogeneous solutions $u_i(t, x) = u_i(t)$ of the ODE \begin{equation}  \dot {u_i}= f_i(u_1,...u_n) \quad \hbox{in}\quad  \mathbb{R}^+ \end{equation} are particular solutions of the problem since the boundary conditions are satisfied. From these two remarks and Theorem \ref{strongpos.pre}, it is easy to deduce the following result

 \begin{thm}\label{Neupos.pre}  Assuming condition (\ref{pos-cond}), let $U(t, x) = (u_1(t, x),...u_n(t, x) )$ be any maximal positive trajectory of the system (\ref{parab.}) with boundary conditions \ref{Neu}. Then if  the initial vector $U(0)= U_0= (u_{0,1}( x),...u_{0,n}( x) )$ is such that for each $i\in {1,...n}$, we have $\forall x \in \Omega, u_{0,i}(x)\ge 0 $, then all components of  the solution vector $U(t, x)$ are non-negative everywhere in $\Omega$ for all $ t\in [0, T_{max})$. Conversely is this property is satisfied for all positive initial vectors $U_0= (u_{0,1}( x),...u_{0,n}( x) )$, then F has to satisfy condition (\ref{pos-cond}).  Finally whenever $j\in \{1,...n\}$ is such that $u_j(0,.)\not=0$, we have $$ \forall t\in(0, T_{\max}),\quad \inf_{x\in\overline{\Omega} }u_j(t,x)>0. $$\end{thm}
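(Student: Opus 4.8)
The plan is to obtain all three assertions from Theorem~\ref{strongpos.pre} and the two observations recorded just above the statement, handling the converse first (it is the easiest) and then the two positivity claims. For the converse, suppose non-negativity is preserved from every non-negative initial vector, and restrict attention to the spatially constant data $U_0=(c_1,\dots,c_n)$ with all $c_i\ge 0$. By the second observation the corresponding solution of (\ref{parab.})--(\ref{Neu}) is exactly the solution of the ODE $\dot U=F(U)$ issued from $(c_1,\dots,c_n)$, constant in $x$. Hence the ODE semi-flow preserves the non-negative orthant, and the converse part of Theorem~\ref{pos.pre-we} forces $F$ to satisfy (\ref{pos-cond}). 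So the converse costs nothing beyond the two earlier results.

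For the non-negativity statement I would argue by perturbation so as to remove two difficulties at once: components of $U_0$ that vanish identically (which are excluded from the hypotheses of Theorem~\ref{strongpos.pre}) and the small-time positivity that that theorem takes as a standing assumption. Fix $T<T_{max}$ and, for $\ep>0$, let $U_\ep$ be the mild solution with $U_\ep(0)=U_0+\ep(1,\dots,1)$, so that each component is $\ge\ep$, in particular $\ge 0$ and $\not\equiv 0$. Since each Neumann semigroup $S_i(t)$ is positivity preserving and fixes the constants, $S_i(t)(u_{0,i}+\ep)\ge\ep$ for every $t\ge 0$; and because $S_i(t)$ is a contraction, the Duhamel term in the mild formula is bounded in sup-norm by $Ct$, where $C$ bounds the $|f_i(U_\ep(s))|$ on $[0,T]$ uniformly in small $\ep$ (continuous dependence). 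Hence the $i$-th component of $U_\ep$ satisfies $u_{\ep,i}(t,x)\ge \ep-Ct\ge \ep/2>0$ for $0\le t\le \ep/(2C)$, which is exactly the small-time strict positivity required in Theorem~\ref{strongpos.pre}; the remaining hypothesis of that theorem, the strong positivity \ref{strongpos} of each $S_i$, is guaranteed in its strengthened form by the first observation. Theorem~\ref{strongpos.pre} then gives $U_\ep(t,x)>0$ on $[0,T]$, and letting $\ep\to0$ yields $U(t,x)\ge 0$ on $[0,T]$; since $T<T_{max}$ is arbitrary, the first assertion follows.

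For the final strict-positivity clause I would use the non-negativity just obtained to set up a scalar comparison on each good component. Fix $j$ with $u_j(0,\cdot)\not\equiv 0$ and let $M$ be a Lipschitz constant for $F$ on a bounded set containing the range of $U$ over $[0,T]\times\overline{\Omega}$ together with its projection onto $\{x_j=0\}$. Writing $f_j(U)=\big(f_j(U)-f_j(u_1,\dots,0,\dots,u_n)\big)+f_j(u_1,\dots,0,\dots,u_n)$ and using (\ref{pos-cond}) on the second term and the Lipschitz bound on the first (all other components being $\ge 0$), we get $f_j(U(s,x))+Mu_j(s,x)\ge 0$. Rewriting the mild equation with the shifted generator $L_j-M$,
$$ u_j(t)=e^{t(L_j-M)}u_j(0)+\int_0^t e^{(t-s)(L_j-M)}\big(f_j(U(s))+Mu_j(s)\big)\,ds, $$
the integral term is non-negative because $e^{\tau(L_j-M)}=e^{-M\tau}S_j(\tau)$ is positivity preserving, while the first term satisfies $\inf_{x\in\overline{\Omega}}e^{t(L_j-M)}u_j(0)=e^{-Mt}\inf_{x\in\overline{\Omega}}[S_j(t)u_j(0)]>0$ for $t>0$ by the strengthened strong positivity. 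Hence $\inf_{x\in\overline{\Omega}}u_j(t,x)>0$ on $(0,T]$, proving the last assertion.

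The step I expect to be the genuine obstacle is the verification of the small-time strict positivity that Theorem~\ref{strongpos.pre} assumes: a crude Duhamel estimate only bounds the linear part from below by $\inf_x S_i(t)u_{0,i}$, which may tend to $0$ as $t\to 0$ when $u_{0,i}$ vanishes somewhere, so the nonlinear correction $O(t)$ cannot be absorbed directly. This is precisely why I pass to the perturbed data $U_0+\ep(1,\dots,1)$: the Neumann semigroups fix the constants, so the perturbed data remain bounded below by $\ep$ under every $S_i(t)$ and the crude estimate closes. This reliance on constant-preservation is special to the Neumann setting and is exactly the ingredient that will have to be replaced by a different device in the Dirichlet case of the next section.
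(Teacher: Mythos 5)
Your proposal is correct and follows essentially the same route as the paper: the converse via spatially homogeneous solutions and the ODE result, non-negativity via the perturbation $U_0+\ep(1,\dots,1)$ combined with Theorem \ref{strongpos.pre} and passage to the limit, and strict positivity via the comparison $f_j(U)\ge -Mu_j$ yielding $u_j(t)\ge e^{-Mt}S_j(t)u_j(0)$. Your explicit Duhamel verification of the small-time positivity hypothesis (using that the Neumann semigroups fix constants) is merely a more detailed version of the paper's appeal to continuity.
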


\begin{proof}  The second part is obvious since we can apply the first remark on ODE systems to spatially homogeneous solutions. For the first part, we start by replacing 
$U_0= (u_{0,1}( x),...u_{0,n}( x) )$ by $U^\varepsilon_0= (u_{0,1}( x)+ \varepsilon,...u_{0,n}( x) +\varepsilon)$ and consider the corresponding maximal solution $U^\varepsilon(t, x) = (u^\varepsilon_1(t, x),...u^\varepsilon_n(t, x) )$ of the system (\ref{parab.}) with boundary conditions \ref{Neu}. It is classical (cf e.g. \cite{CH}, Proposition 4.3.7. p.59) that the life time $T^\varepsilon_{max}$of $U^\varepsilon$ overpasses any number $ T<T_{max} $ for $\varepsilon$ small enough and  the sequence of vector functions $U^\varepsilon(t, x) = (u^\varepsilon_1(t, x),...u^\varepsilon_n(t, x) )$ converges uniformly to $U(t, x)$  in $C([0, T] \times \overline{\Omega})$ as $\varepsilon$ tends to $0$.  Now clearly, for any $\varepsilon>0$, the solution  $U^\varepsilon(t, x)$ has, by continuity, all its components positive in $\overline{\Omega}$ for $t$ small. Theorem \ref{strongpos.pre} applied to $U^\varepsilon$ now shows that the solution  $U^\varepsilon(t, x)$ has all its components positive in $\Omega$ for all $t\in [0, T]$. We conclude by letting $\varepsilon$ tend to $0$. The last statement follows from the inequality $$\frac{\partial u_j}{\partial t} - d_j \Delta u_i = f_i(u_1,...u_n) \ge -M u_j\quad \hbox{in}\quad  (0, T]\times \Omega  $$ which implies $u_j(t, x) \ge \exp(-Mt) S_j(t) \psi (x)$ where $\psi(x): = u_j(0,x)$
\end{proof}

 \section{The Dirichlet case}\label{Dir.} In this section we assume that $\Omega$ is connected with a $C^2$ boundary and we investigate the positivity preserving property for a reaction-diffusion system with diagonal diffusion part of the form 
\begin{equation}  \frac{\partial u_i}{\partial t} - d_i \Delta u_i = f_i(u_1,...u_n) \quad \hbox{in}\quad  \mathbb{R}^+\times \Omega \end{equation}
with homogeneous Dirichlet boundary conditions \begin{equation} \label{Dir.} u_i= 0\quad \hbox {on}\quad  \mathbb{R}^+\times \partial \Omega.  \end{equation}
 As a first observation, we remark that for each $i$ , the contraction semi-group $ S_i(t) = \exp(t d_i\Delta)$ on $X = C_0(\Omega)= \{u\in C(\overline{\Omega}) / u=0  \,\,\hbox {throughout}\,\, \partial\Omega $ associated with the homogeneous Dirichlet boundary conditions satisfies  (\ref{strongpos}). From this remark and Theorem \ref{strongpos.pre}, we deduce
 \begin{thm}\label{Dirpos.pre}  Assuming condition (\ref{pos-cond}), let $U(t, x) = (u_1(t, x),...u_n(t, x) )$ be any maximal positive trajectory of the system (\ref{parab.}) with boundary conditions \ref{Dir.}. Then if  the initial vector $U(0)= U_0= (u_{0,1}( x),...u_{0,n}( x) )$ is such that for each $i\in {1,...n}$, we have $\forall x \in \Omega, u_{0,i}(x)\ge 0 $, then all components of  the solution vector $U(t, x)$ are non-negative everywhere in $\Omega$ for all $ t\in [0, T_{max})$.  Moreover whenever $j\in \{1,...n\}$ is such that $u_j(0,.)\not=0$, we have $$ \forall t\in(0, T_{\max}),\,\,\forall x\in\Omega, \quad u_j(t,x)>0. $$ \end{thm}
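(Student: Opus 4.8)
The plan is to mirror the proof of Theorem~\ref{Neupos.pre}, the only genuine difference being that in the Dirichlet case the lifting perturbation cannot be a constant, since a constant violates the boundary condition and does not belong to $X=C_0(\Omega)$. First I would fix once and for all a function $\phi\in C_0(\Omega)$ with $\phi>0$ throughout $\Omega$ (for instance the first Dirichlet eigenfunction of $-\Delta$, which is positive in $\Omega$ and vanishes on $\partial\Omega$), and replace the initial datum $U_0$ by $U_0^\varepsilon=(u_{0,1}+\varepsilon\phi,\dots,u_{0,n}+\varepsilon\phi)$. Each component then satisfies $u_{0,i}+\varepsilon\phi\ge\varepsilon\phi\ge 0$ with $u_{0,i}+\varepsilon\phi\not\equiv 0$, so the corresponding maximal solution $U^\varepsilon$ meets the nonnegativity and nontriviality requirements of Theorem~\ref{strongpos.pre}. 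As in the Neumann case, continuous dependence (e.g. \cite{CH}, Proposition 4.3.7) guarantees that the lifetime of $U^\varepsilon$ exceeds any fixed $T<T_{max}$ for $\varepsilon$ small and that $U^\varepsilon\to U$ uniformly on $[0,T]\times\overline{\Omega}$.

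Next I would verify the standing hypothesis of Theorem~\ref{strongpos.pre}, namely that the components of $U^\varepsilon$ are strictly positive in $\Omega$ for small $t>0$. On any subinterval on which all components remain nonnegative, the splitting $f_i(U^\varepsilon)=[f_i(U^\varepsilon)-f_i(\dots,0,\dots)]+f_i(\dots,0,\dots)\ge -Mu_i^\varepsilon$, obtained from the local Lipschitz bound together with condition (\ref{pos-cond}), yields $\partial_t u_i^\varepsilon-d_i\Delta u_i^\varepsilon\ge -Mu_i^\varepsilon$ with $u_i^\varepsilon=0$ on $\partial\Omega$, hence the comparison inequality $u_i^\varepsilon(t,\cdot)\ge e^{-Mt}S_i(t)(u_{0,i}+\varepsilon\phi)$. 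By Hypothesis~\ref{strongpos} the right-hand side is strictly positive throughout $\Omega$ for every $t>0$, which both supplies the base case and, through Theorem~\ref{strongpos.pre}, propagates strict positivity in $\Omega$ to all $t\in[0,T]$. Letting $\varepsilon\to 0$ and using the uniform convergence then gives $u_i(t,x)\ge 0$ for all $i$, all $x\in\Omega$ and all $t\in[0,T]$; since $T<T_{max}$ was arbitrary, the first assertion follows.

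For the final, strict positivity statement I would no longer perturb: once nonnegativity of the genuine solution $U$ is known, the same splitting applied directly to $U$ gives $f_j(U)\ge -Mu_j$ on $[0,T]\times\Omega$, whence $\partial_t u_j-d_j\Delta u_j\ge -Mu_j$ and therefore $u_j(t,\cdot)\ge e^{-Mt}S_j(t)u_j(0,\cdot)$. If $u_j(0,\cdot)\not\equiv 0$ then, $u_j(0,\cdot)$ being nonnegative and nontrivial, Hypothesis~\ref{strongpos} forces $S_j(t)u_j(0,\cdot)>0$ everywhere in $\Omega$ for $t>0$, giving $u_j(t,x)>0$ for all $x\in\Omega$ and all $t\in(0,T_{max})$.

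I expect the main obstacle to be precisely the degeneracy at the Dirichlet boundary: because the admissible perturbation $\varepsilon\phi$ itself vanishes on $\partial\Omega$, one cannot obtain positivity up to the boundary for small $t$ by mere continuity, as was done in the Neumann proof on the compact set $\overline{\Omega}$. The point to get right is that only interior positivity is needed and that it is delivered by the strong positivity of the Dirichlet heat semigroup (Hypothesis~\ref{strongpos}) through the comparison inequality above; one must also check that the first time a component vanishes in $\Omega$ is well defined and interior, the boundary being permanently pinned at zero so that sign changes can only initiate inside $\Omega$, exactly where the inequality precludes them.
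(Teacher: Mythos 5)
Your setup (perturbation of the initial datum by $\varepsilon\varphi_1$, continuous dependence to pass to the limit $\varepsilon\to 0$, and the final strict-positivity claim via $u_j(t,\cdot)\ge e^{-Mt}S_j(t)u_j(0,\cdot)$ and Hypothesis \ref{strongpos}) coincides with the paper's. The gap is in the one step that is genuinely specific to the Dirichlet case: verifying the standing hypothesis of Theorem \ref{strongpos.pre} that the components of $U^\varepsilon$ are positive in $\Omega$ for $t>0$ small. You propose to obtain this from the comparison inequality $u_i^\varepsilon(t,\cdot)\ge e^{-Mt}S_i(t)(u_{0,i}+\varepsilon\varphi_1)$, but that inequality is derived from the splitting $f_i(U^\varepsilon)\ge -Mu_i^\varepsilon$, which is only valid on an interval where all components are already known to be nonnegative (condition (\ref{pos-cond}) controls $f_i(\ldots,0,\ldots)$ only for nonnegative arguments). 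The argument is therefore circular: you need nonnegativity for small $t$ to launch the very inequality that is supposed to produce it. In the Neumann case this base case is free, because the perturbed datum is bounded below by $\varepsilon>0$ on the compact set $\overline{\Omega}$ and uniform continuity gives positivity for small $t$; here the perturbed datum $u_{0,i}+\varepsilon\varphi_1$ vanishes on $\partial\Omega$, and your remark that ``sign changes can only initiate inside $\Omega$'' does not hold up: negative values could a priori appear at interior points arbitrarily close to $\partial\Omega$ at arbitrarily small times, and since $\inf_{\Omega} u_i^\varepsilon(t,\cdot)=0$ even when the solution is positive in $\Omega$, there is no compactness from which to extract a positive ``first vanishing time''.

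The paper closes this gap quantitatively. Writing the Duhamel formula for $u_j^\varepsilon$, it invokes the smoothing of the Dirichlet heat semigroup from $C_0(\Omega)$ into $C^1(\overline{\Omega})$ (\cite{HK}, Theorem 1.1) together with the barrier estimate $|w(x)|\le C_1\|\nabla w\|_{\infty}\,\mathrm{dist}(x,\partial\Omega)\le C_2\|w\|_{C^1(\overline{\Omega})}\varphi_1(x)$ to bound the integral term by $Ct^{1/3}\varphi_1(x)$, whence $u_j^\varepsilon(t,x)\ge \varepsilon e^{-d_j\lambda_1 t}\varphi_1(x)-Ct^{1/3}\varphi_1(x)\ge \frac{\varepsilon}{2}\varphi_1(x)>0$ in $\Omega$ for $t$ small. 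The point is that both the linear evolution of the datum and the nonlinear correction are measured against the same weight $\varphi_1$, which is what lets the comparison survive the degeneracy at the boundary. Some such quantitative boundary estimate (or an equivalent device) is indispensable; without it your proof of the first assertion does not go through, although your treatment of the last statement is correct and matches the paper's.
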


\begin{proof}  We follow the scheme of proof of the previous result  by replacing $U_0$ by $U^\varepsilon_0= (u_{0,1}( x)+ \varepsilon\varphi_1(x),...u_{0,n}( x) +\varepsilon\varphi_1(x))$ where $\varphi_1$ is the normalized positive eigenfunction associated to the first eigenvalue $ -\Delta$ in $H^1_{0}(\Omega)$  and consider the corresponding maximal solution $U^\varepsilon(t, x) = (u^\varepsilon_1(t, x),...u^\varepsilon_n(t, x) )$ of the system (\ref{parab.}) with boundary conditions \ref{Dir.}. Here also, the life time $T^\varepsilon_{max}$of $U^\varepsilon$ overpasses any number $ T<T_{max} $ for $\varepsilon$ small enough and  the sequence of vector functions $U^\varepsilon(t, x) = (u^\varepsilon_1(t, x),...u^\varepsilon_n(t, x) )$ converge uniformly to $U(t, x)$  in $C([0, T] \times \overline{\Omega}$ as $\varepsilon$ tends to $0$.  \\

At this point we need to check that for any $\varepsilon>0$, the solution  $U^\varepsilon(t, x)$ has all its components positive in $\Omega$  for $t$ small. Actually we have for all $j$ the formula $$ u^\varepsilon_j(t, x) = S_j(t)(u_{0,j}( x)+ \varepsilon\varphi_1(x)) + \int _0^t S_j(t-s)f_j(u^\varepsilon_1(s, x),...u^\varepsilon_n(s, x) ) ds $$ Using boundedness of the components and the smoothing effect of the heat semigroup from $X = C_0(\Omega)$ to $ C^1(\overline{\Omega})$ such as described in \cite{HK}, Theorem 1.1, we easily find for some constant $C$ depending on the solution the estimate valid for $t$ small:
$$ u^\varepsilon_j(t, x) \ge \varepsilon \exp( -d_j \lambda_1 t)\varphi_1(x) - Ct^{1/3}\varphi_1(x) .$$ From this formula it is clear that for $t$ small enough
$$ u_j(t, x) \ge \frac{\varepsilon}{2}\varphi_1(x) $$ The regularity of $\partial \Omega$ is needed here since we use the estimate $$|w(x)| \le C_1||\nabla w||_{\infty} dist(x, \partial\Omega) \le C_2 ||w||_{C^1(\overline{\Omega})}\varphi_1(x) $$ to derive the bound $$| \int _0^t S_j(t-s)f_j(u^\varepsilon_1(s, x),...u^\varepsilon_n(s, x) ) ds | \le K( U_\varepsilon) \int _0^t (t-s)^{-2/3}\varphi_1(x) ds$$ for $t$ small enough depending on $\varepsilon$.
Theorem \ref{strongpos.pre} applied to $U^\varepsilon$ now shows that the solution  $U^\varepsilon(t, x)$ has all its components positive in $\Omega$ for all $t\in [0, T]$ . We conclude by letting $\varepsilon$ tend to $0$. The proof of the last property is the same as in the Neumann case. \end{proof}
The converse of Theorem \ref{Dirpos.pre} is also true, although more delicate to prove. More precisely we have

 \begin{thm}\label{Dir-Conv} Assume conversely that whenever the initial vector
 $U(0)= U_0\in [C_0(\Omega)]^n$ is such that for each $i\in {1,...n}$, we have $\forall x \in \Omega, u_{0,i}(x)\ge 0 $, then all components of  the solution vector $U(t, x)$ are non-negative everywhere in $\Omega$ for all $ t\in [0, T_{max})$.Then condition (\ref{pos-cond}) is satisfied. \end{thm}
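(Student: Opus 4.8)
The plan is to argue by contradiction: assuming that condition (\ref{pos-cond}) fails, I would construct nonnegative Dirichlet initial data whose solution acquires a strictly negative $i$-th component in short time, contradicting the assumed positivity preservation. The essential difficulty, and the reason this converse is more delicate than the Neumann one, is that spatially constant profiles do \emph{not} satisfy the Dirichlet boundary condition, so the reduction to the ODE system used in the converse part of Theorem \ref{Neupos.pre} is unavailable. Instead the negative reaction must be localized at an interior point, and the competing positive effect of diffusion must be shown to be negligible for small time.

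Concretely, suppose there exist an index $i$ and a vector $v=(v_1,\dots,v_{i-1},0,v_{i+1},\dots,v_n)$ with $v_j\ge 0$ for $j\ne i$ and $f_i(v)=-2\delta<0$. By continuity of $F$ there is $\eta>0$ such that $f_i(w)\le -\delta$ whenever $|w_j-v_j|\le\eta$ for $j\ne i$ and $0\le w_i\le\eta$. I would fix an interior point $x_0\in\Omega$ and $\rho>0$ with $\overline{B(x_0,\rho)}\subset\Omega$, and choose initial data $U_0\in[C_0(\Omega)]^n$ that are nonnegative, vanish on $\partial\Omega$, and satisfy $u_{0,j}\equiv v_j$ on $B(x_0,\rho)$ for $j\ne i$ and $u_{0,i}\equiv 0$ on $B(x_0,\rho)$. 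Let $U$ be the corresponding maximal solution; by hypothesis all its components remain $\ge 0$.

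The heart of the proof is to estimate, for small $t>0$, the value at $x_0$ through the mild formula
$$u_i(t,x_0)=[S_i(t)u_{0,i}](x_0)+\int_0^t [S_i(t-s)\,f_i(U(s,\cdot))](x_0)\,ds,$$
using the Dirichlet heat kernel $p^D_\tau(x_0,\cdot)$ of $d_i\Delta$, which is nonnegative, sub-Markovian, and dominated by the Gaussian kernel. First, since $u_{0,i}\ge0$ vanishes on $B(x_0,\rho)$, the first term is nonnegative and bounded by $\|u_{0,i}\|_\infty\int_{|y-x_0|\ge\rho}p^D_t(x_0,y)\,dy=O(e^{-c\rho^2/t})=o(t)$. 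Next I split the reaction integrand over $B(x_0,\rho/2)$ and its complement. By uniform continuity of $U$ in $C([0,T],X^n)$ together with the assumed nonnegativity, for $t$ small every component of $U(s,x)$ with $s\le t$ and $x\in B(x_0,\rho/2)$ lies within $\eta$ of $v$ (with $0\le u_i\le\eta$), hence $f_i(U(s,x))\le-\delta$ there; since $\int_{B(x_0,\rho/2)}p^D_{t-s}(x_0,y)\,dy\to1$ uniformly as $t-s\to0$, the near contribution is $\le-\frac{\delta}{2}t$ for $t$ small. The far contribution is controlled in absolute value by $\|f_i\|_{L^\infty}\int_0^t\int_{|y-x_0|\ge\rho/2}p^D_{t-s}(x_0,y)\,dy\,ds=O(te^{-c\rho^2/t})=o(t)$ via the Gaussian tail bound. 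Combining the three estimates gives $u_i(t,x_0)\le-\frac{\delta}{2}t+o(t)<0$ for all sufficiently small $t>0$, contradicting nonnegativity and thereby forcing (\ref{pos-cond}).

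I expect the main obstacle to be the rigorous localization via the heat kernel: one must ensure that the positive diffusive inflow at $x_0$, coming both from the far-field reaction term of uncontrolled sign and from the initial data outside the ball, is of strictly smaller order than the negative, linear-in-$t$ reaction produced near $x_0$. This is exactly where the interior Gaussian upper bound and the near-diagonal normalization $\int_\Omega p^D_\tau(x_0,y)\,dy\to1$ for an interior point $x_0$ are needed. The smoothing that lets $S_i(t-s)$ act on the merely bounded (and generally non-$C_0$) function $f_i(U(s,\cdot))$ is the same one already invoked in the proof of Theorem \ref{Dirpos.pre}, so no new regularity input is required.
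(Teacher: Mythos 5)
Your argument is correct, but it takes a genuinely different route from the paper's. You localize: you plant the bad data $v$ (with $f_i(v)=-2\delta<0$) on an interior ball, evaluate the mild formula at the centre $x_0$, and beat the diffusive inflow by Gaussian upper bounds and the near-diagonal normalization $\int_\Omega p^D_\tau(x_0,y)\,dy\to 1$, obtaining the pointwise conclusion $u_i(t,x_0)\le -\tfrac{\delta}{2}t+o(t)<0$. The paper instead globalizes and linearizes the problem: it takes the constant vector $(u_1,\dots,0,\dots,u_n)$ as initial datum (which is merely $L^\infty$, not $C_0(\Omega)$), approximates it from below by admissible nonnegative data $U^k\in[C_0(\Omega)]^n$ with $i$-th component identically zero, passes to the limit in $C([0,\delta],L^2)$ using a uniform existence time for $L^\infty$ data, and then pairs the integral equation for $u_i^k$ with the first Dirichlet eigenfunction $\varphi_1$. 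Self-adjointness of $S_i$ and the relation $S_i(\tau)\varphi_1=e^{-d_i\lambda_1\tau}\varphi_1$ collapse everything to a scalar quantity $\int_\Omega u_i^k(t,x)\varphi_1(x)\,dx\ge 0$ whose right derivative at $t=0$ is $-\beta\int_\Omega\varphi_1<0$, the desired contradiction. The trade-off is clear: the paper's weak formulation needs no heat-kernel estimates at all (only $L^2$ continuity, self-adjointness, and the spectral identity), but it does lean on the symmetry of $d_i\Delta$ and on the approximation/uniform-lifespan step that you avoid; your pointwise version uses only admissible $C_0$ data from the outset and does not require self-adjointness, at the price of invoking the sub-Markovian property, the Gaussian domination of the Dirichlet kernel, and the exponential smallness of the exit probability from an interior ball. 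Both sets of ingredients are standard for the operators considered here, so either proof is acceptable; just make sure, in a final write-up, to justify that the semigroup in the Duhamel term acts on the bounded but non-$C_0$ function $f_i(U(s,\cdot))$ via the kernel representation, as you indicate at the end.
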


\begin{proof}  We reason by contradiction. Let us suppose that  $$ f_i (u_1,...,u_{i-1}, 0,u_{i+1},...,u_n) = -\beta< 0 $$ for a certain index  $ i $ and some $(n-1)$-vector $(u_1,... ,u_{i-1},u_{i+1},... ,u_n)$  with all its components  $\ge 0$. First of all we observe that for any $\varepsilon>0$ the system of equations  $$\frac{\partial u_j}{\partial t} - d_j\Delta u_j = f_j(u_1,...u_n) \quad \hbox{in}\quad  \mathbb{R}^+\times \Omega $$ with homogeneous Dirichlet boundary conditions has a mild local solution $U$ with initial value $(u_1,... ,u_{i-1},0, u_{i+1},... ,u_n)  $ which is defined on a small time interval $ [0, \delta]$, uniformly bounded on $\Omega$  and  continuous with values in $ L^2 (\Omega)$ .   Moreover this property is in fact true for any initial data in $ [L^\infty (\Omega]^n)$ with a fixed existence time $\delta$ depending only on the initial norm in  $ [L^\infty (\Omega]^n)$. More precisely, if $M>0$ is a bound of the given initial norm and $L = L(R)$ is a Lipschitz norm of $F$ in the n-product of balls of centre $0$ and radius $R>M$, the number $\delta$ just needs to satisfy the condition  $ M + \delta ( ||F(0)||+LR) < R$  for some $R>M$, i.e. $ \delta < \frac{R-M}{||F(0)||+LR} $ . For $R= 2M$ this amounts to $ \delta < \frac{M}{||F(0)||+2LM} $. We skip the details but now the important thing is that we can approach  the initial conditions $(u_1,... ,u_{i-1},0, u_{i+1},... ,u_n)  $ from below by a sequence of vectors $U^k \in X^n = [C_0(\Omega)]^n$ with all components non-negative, keeping the component of index $i$ equal to $0$. The component of index $i$ satisfies the integral equation 
$$ u^k_i(t, x) =  \int _0^t S_i(t-s)f_i(u^k_1(s, x),0, ...u^k_n(s, x) ) ds $$ On the small interval $ [0, \delta]$, the vector $U^k(t, x)$ tends to $U(t, x)$ in $C([0, \delta], L^2)$. Therefore  $f_j(u^k_1(s, x),0, ...u^k_n(s, x)$ also tends to $f_i(u_1(s, x),0, ...u_n(s, x)$ in $C([0, \delta], L^2)$. Hence, taking the inner product in $ L^2 (\Omega)$ by the positive normalized first  eigenfunction $ \varphi_1 $ of the Dirichlet- Laplacian and using the self-ajoint character of the linear semigroup $S_i(t) $ we obtain$$ \int_{\Omega} u^k_i(t, x) \varphi_1 (x) dx  =  \int _0^t \int_{\Omega}f_i(u^k_1(s, x),0, ...u^k_n(s, x) ) S_i(t-s)\varphi_1 (x)dx ds$$ whence   $$ \lim_{k\rightarrow \infty}\int_{\Omega} u^k_i(t, x) \varphi_1 (x) dx =\int _0^t \int_{\Omega}f_i(u_1(s, x),0, ...u_n(s, x) ) S_i(t-s)\varphi_1 (x)dxds $$ Finally the last term is equal to $$ \int _0^t \int_{\Omega}f_i(u_1(s, x),0, ...u_n(s, x) ) \exp(-d_i\lambda_1(t-s))\varphi_1 (x)dx ds $$ or 
$$ \exp(-t d_i\lambda_1)\int _0^t \int_{\Omega}f_i(u_1(s, x),0, ...u_n(s, x) ) \exp(-d_i\lambda_1s)\varphi_1 (x)dxds $$ But due to the continuity we now have 
$$ \lim_{t\rightarrow 0}\frac{1}{t}\int _0^t \int_{\Omega}f_i(u_1(s, x),0, ...u_n(s, x) ) \exp(-d_i\lambda_1s)\varphi_1 (x)dxds = -\beta \int_{\Omega}\varphi_1 (x)dx<0 $$
 This contradiction concludes the proof \end{proof}

 \section{Concluding remarks, invariant rectangles} It is clear that in our main results on semilinear equations, the diffusion operators $ L_i = d_i \Delta$ can be replaced by any strongly  elliptic differential operator of order 2 with smooth coefficients in divergence form , and they do not need to be multiples of the same operator. Moreover, some of the results can be generalized to non-autonomous systems of the form $$ \dot{U} = LU+ F(t, U) $$ but, as already mentionned in the case of ODE systems, the converse statements on $F$ will not be true in general. Finally some of the results should be also valid in unbounded domains. We end the paper by a few simple observations on related topics.

 \begin{rmk}\label{Brez2} \begin{em}The application of \cite{Brezis} to positivity preserving of finite dimensional systems is quite immediate since the  positive cone in $ \mathbb{R}^n$ is a particular polyhedron with a very simple boundary. However the direct proof is quite simple and gives a framework easily applicable to parabolic problems. \end{em}\end{rmk}
 
  \begin{rmk}\label{Brez} \begin{em} The characterization by Brezis (cf.\cite{Brezis}) of positive invariance is also valid in general Banach spaces. Analogous formulations have been used for general evolution equations, cf. e.g. \cite{M, Pavel}. \end{em} \end{rmk}
 
 \begin{rmk}\label{Inv-sets} \begin{em} The criterion from \cite{Brezis} has been extended  to find other invariant sets, in particular the obtention of invariant sets of the form $a\le u\le b$ provides immediately global existence results for some initial data. In the case of systems, the method of sub and super solutions is not applicable to the full system and invariant regions become very useful.  Actually in the case of Neumann boundary conditions, general invariant rectangles can be characterized easily as a consequence of the method of proof of Theorem \ref{Neupos.pre}  \end{em} \end{rmk}
 
 Let us consider two sequences of n real numbers $ (\alpha_1, ...\alpha_n)$ and  $(\beta_1, ...\beta_n$ with $\alpha_j<\beta_j$ for all $ j\in \{1,...n\}$. Then we have 
 
 \begin{thm}\label{Invariant rectangles}  The region $ {\displaystyle  R = \prod_{1\le j\le n} [\alpha_j, \beta_j] }$ is invariant under the (local) semi flow generated by the  system 
 $ \dot{U} = F(U) $ if, and only if the two following conditions are fulfilled
  \begin{equation}  \label{Rinf} \forall (u_j)_{j\not=i} \in\prod_{j\not =i} [\alpha_i, \beta_j] , \quad f_i (u_1,...,u_{i-1}, \alpha_i,u_{i+1},...,u_n) \ge 0  \end{equation}\begin{equation} 
  \label{Rsup} \forall (u_j)_{j\not=i} \in\prod_{j\not =i} [\alpha_j, \beta_j] , \quad f_i (u_1,...,u_{i-1}, \beta_i,u_{i+1},...,u_n) \le 0  \end{equation} Moreover the rectangle ${\cal R} = \{U\in C(\overline{\Omega}, {\mathbb R}^n), \quad \forall x \in \overline{\Omega}, \,U(x) \in R \} $ is invariant under the local semi flow generated by the parabolic system (\ref{parab.}) with  homogeneous Neumann boundary conditions (\ref {Neu}) if, and only if the conditions ( \ref{Rinf}) and (\ref{Rsup}) are both satisfied.

\end{thm}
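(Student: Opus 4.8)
The plan is to reduce the rectangle to the positive orthant by the two affine substitutions $v_i=u_i-\alpha_i$ and $w_i=\beta_i-u_i$, and then to reuse the Gronwall argument of Proposition \ref{pos.pre} (for the ODE assertion) and the semigroup comparison of Theorem \ref{strongpos.pre} (for the Neumann assertion). The point is that $v_i$ and $w_i$ are $\ge0$ exactly when $\alpha_i\le u_i\le\beta_i$, that each solves an evolution problem of the same diagonal type (in the Neumann case the constants $\alpha_i,\beta_i$ are annihilated by $d_i\Delta$ and are compatible with (\ref{Neu})), and that conditions (\ref{Rinf}) and (\ref{Rsup}) are precisely the one-sided analogues of (\ref{pos-cond}) at the lower and upper faces: $f_i\ge0$ on the face $u_i=\alpha_i$ and $f_i\le0$ on the face $u_i=\beta_i$, the remaining coordinates ranging over their intervals.

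For the ODE part I would dispose of necessity as in the remark preceding Proposition \ref{pos.pre}: if (\ref{Rinf}) fails at some index $i$, the trajectory issued from the face point $(u_1,\dots,\alpha_i,\dots,u_n)\in R$ has $\dot u_i(0)=f_i<0$ and leaves $R$ at once; (\ref{Rsup}) is symmetric. For sufficiency I would approximate any $U(0)\in R$ by interior data, e.g. $U^\varepsilon(0)=(1-\varepsilon)U(0)+\varepsilon C$ with $C$ the centre of $R$, so that $\alpha_i<u_i^\varepsilon(0)<\beta_i$. Letting $T$ be the first time $U^\varepsilon$ meets $\partial R$, on $[0,T)$ all coordinates lie in their open intervals, so (\ref{Rinf})--(\ref{Rsup}) apply; writing as in Proposition \ref{pos.pre} $\dot v_i=[f_i(U^\varepsilon)-f_i(\dots,\alpha_i,\dots)]+f_i(\dots,\alpha_i,\dots)\ge -Mv_i$ (and symmetrically $\dot w_i\ge -Mw_i$) with $M$ a Lipschitz constant of $F$ on a box containing the trajectory and its face projections, Gronwall gives $v_i(T)\ge e^{-MT}v_i(0)>0$ at a contacted lower face (resp. $w_i(T)\ge e^{-MT}w_i(0)>0$ at an upper face), contradicting that this face is reached at $T$. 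Hence $U^\varepsilon$ stays in $\mathrm{int}\,R$ throughout its life, and letting $\varepsilon\to0$ on a fixed $[0,T]$ with $T<T_{\max}$, $C^1$ continuous dependence yields $U(t)\in R$.

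For the Neumann part, necessity is immediate from the ODE case: the spatially homogeneous solutions of (\ref{parab.})--(\ref{Neu}) are ODE solutions, so invariance of $\mathcal R$ forces invariance of $R$ under $\dot U=F(U)$, hence (\ref{Rinf})--(\ref{Rsup}). For sufficiency I would run the same first-contact scheme on approximants $U^\varepsilon(0,\cdot)$ valued in $\mathrm{int}\,R$, using (as in Theorem \ref{Neupos.pre}) the classical fact that the $U^\varepsilon$ exist past any $T<T_{\max}$ and converge uniformly to $U$ on $[0,T]\times\overline\Omega$. While the solution stays in $\mathcal R$ the pointwise bounds (\ref{Rinf})--(\ref{Rsup}) give $\partial_t v_i-d_i\Delta v_i\ge -Mv_i$ and $\partial_t w_i-d_i\Delta w_i\ge -Mw_i$ with Neumann conditions, so the comparison used in Theorem \ref{strongpos.pre} yields $v_i(t,\cdot)\ge e^{-Mt}S_i(t)v_i(0,\cdot)$ and likewise for $w_i$. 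Since $v_i(0,\cdot),w_i(0,\cdot)\ge c_\varepsilon>0$ are non-trivial, the strengthened strong positivity of the Neumann heat semigroup, $\inf_{\overline\Omega}S_i(t)\phi>0$, forces $\inf_{\overline\Omega}v_i(t,\cdot)>0$ and $\inf_{\overline\Omega}w_i(t,\cdot)>0$ on $[0,T]$, so no face of $R$ is ever touched; letting $\varepsilon\to0$ gives $U(t,\cdot)\in\mathcal R$.

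The main obstacle, in both cases, is the circularity created by the fact that (\ref{Rinf})--(\ref{Rsup}) only constrain $F$ \emph{inside} $R$: the differential inequalities $\dot v_i\ge -Mv_i$, etc., that drive the Gronwall estimate and the semigroup comparison are available only as long as the trajectory has not left $\mathcal R$, which is exactly what is to be proved. I resolve this through the first-contact time $T$, invoking the conditions solely on $[0,T)$, together with the strict interior approximation that makes the resulting lower bound \emph{strictly} positive at $T$. This is also where the Neumann structure is essential, since it renders the spatially constant barriers $\alpha_i,\beta_i$ admissible and makes strong positivity hold up to $\overline\Omega$, neither of which survives under Dirichlet conditions, where such rectangles are in general not invariant.
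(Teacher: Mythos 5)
Your proof is correct and takes essentially the same approach as the paper: the paper's own proof consists only of the remark that necessity is obvious and that sufficiency is ``an easy adaptation'' of Proposition \ref{pos.pre} and Theorem \ref{Neupos.pre}, considering the various ways a solution could escape the rectangle. Your substitutions $v_i=u_i-\alpha_i$, $w_i=\beta_i-u_i$, the first-contact/Gronwall argument with interior approximation, and the Neumann semigroup comparison are precisely that adaptation, written out in full.
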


\begin{proof} The necessary condition is obvious. The proof of sufficient condition is an  easy adaptation of the proof of Proposition \ref{pos.pre} and \ref{Neupos.pre} considering the various possible ways for a solution to  escape the rectangle. \end{proof}

\begin{rmk} \begin{em}The strict versions of (\ref{Rinf}) and (\ref{Rsup}) were used in \cite{BM} to build invariant rectangles. \end{em} \end{rmk}

\begin{rmk} \begin{em} Since parabolic equations with Dirichlet boundary conditions are not invariant through translation by constants, we have no analog of 
Theorem \ref{Invariant rectangles} in that case and the search for invariant regions seems to become more complicated. \end{em} \end{rmk}

 {\small

\end{document}